\newtheorem{theorem}{Theorem}[section]
\newtheorem{lemma}{Lemma}[section]
\newtheorem{remark}{Remark}[section]
\newcommand{\comm}[1]{{\color{black}#1}} % red
\newcommand{\revise}[1]{{\color{black}#1}}  %blue
\begin{document}
\title{Neural Network Element Method for Partial Differential Equations\footnote{This work was supported by 
the Strategic Priority Research Program of the Chinese Academy of 
Sciences (XDB0620203, XDB0640000, XDB0640300),  National Key Laboratory of Computational Physics 
(No. 6142A05230501), National Natural Science Foundations of 
China (NSFC 1233000214), National Center for Mathematics and Interdisciplinary Science, CAS.}}
\author{ Yifan Wang\footnote{School of Mathematical Sciences, Peking University, 
Beijing 100871, China (wangyifan1994@pku.edu.cn).}, \ \ 
Zhongshuo Lin\footnote{LSEC, NCMIS, Institute
of Computational Mathematics, Academy of Mathematics and Systems
Science, Chinese Academy of Sciences, Beijing 100190,
China,  and School of Mathematical Sciences, University
of Chinese Academy of Sciences, Beijing 100049, China (linzhongshuo@lsec.cc.ac.cn).} %, \ \ 
%Yangfei Liao\footnote{LSEC, NCMIS, Institute
%of Computational Mathematics, Academy of Mathematics and Systems
%Science, Chinese Academy of Sciences, Beijing 100190,
%China,  and School of Mathematical Sciences, University
%of Chinese Academy of Sciences, Beijing 100049, China (liaoyangfei@lsec.cc.ac.cn).}, \ \ 
%Haochen Liu\footnote{LSEC, NCMIS, Institute
%of Computational Mathematics, Academy of Mathematics and Systems
%Science, Chinese Academy of Sciences, Beijing 100190,
%China,  and School of Mathematical Sciences, University
%of Chinese Academy of Sciences, Beijing 100049, China (liuhaochen@lsec.cc.ac.cn).}
\ \ \  and \ \
Hehu Xie\footnote{LSEC, NCMIS, Institute
of Computational Mathematics, Academy of Mathematics and Systems
Science, Chinese Academy of Sciences, Beijing 100190,
China,  and School of Mathematical Sciences, University
of Chinese Academy of Sciences, Beijing 100049, China (hhxie@lsec.cc.ac.cn).}}

\date{}
\maketitle

\begin{abstract}
In this paper, based on the combination of finite element mesh and neural network, 
a novel type of neural network element space and corresponding machine 
learning method are designed for solving partial differential equations. 
The application of finite element mesh makes the neural network element space 
satisfy the boundary value conditions directly on the complex geometric domains.  
The use of neural networks allows the accuracy of the approximate solution to 
reach the high level of neural network approximation %without depending on the mesh 
even for the problems with singularities. 
\revise{We also provide the error analysis of the proposed method for the understanding.}  
The proposed numerical method in this paper provides the way to enable neural network-based 
machine learning algorithms to solve a broader range of problems arising 
from engineering applications.

%-----------------------------------------------------------------------------------------------------
\vskip0.3cm {\bf Keywords.} Neural network element, finite element mesh,  machine learning, 
partial differential equation, boundary value condition, complex geometry, singularity. 
%-----------------------------------------------------------------------------------------------------
\vskip0.2cm {\bf AMS subject classifications.}  68T07, 65L70, 65N25, 65B99.
\end{abstract}

\section{Introduction}
It is well known that solving partial differential equations (PDEs) is one of the 
most essential tasks in modern science and engineering society \cite{Evans}. 
There have developed many successful numerical methods such as finite difference, 
finite element, and spectral method for solving PDEs in three spatial dimensions 
plus the temporal dimension. Among these numerical methods, 
the Finite Element Method (FEM) is a powerful and widely used numerical technique for 
solving a variety of PDEs that arise in engineering, 
physics, and applied mathematics. By breaking down complex problems into simpler, 
smaller subdomains, FEM provides a flexible and efficient way to approximate 
solutions for problems that may be difficult or impossible to 
solve analytically \cite{brenner2007mathematical,ciarlet2002finite}. 

At its core, FEM involves discretizing a domain into a finite number of smaller, 
non-overlapping subdomains called elements, which collectively form a mesh. 
Each element is typically associated with a set of nodes, 
and approximate solutions are expressed as combinations of basis functions defined 
over these elements. These basis functions are often polynomials, chosen for 
their mathematical properties and computational efficiency. 
The key steps in the FEM include:
\begin{enumerate}
\item 
Problem Formulation: The problem is first expressed in a weak (or variational) form, 
which involves integrating the PDE against a set of test functions. 
This weak formulation ensures that solutions are well-posed for irregular 
geometries and boundary conditions.

\item 
Mesh Generation: The domain is divided into elements using a mesh, which can be 
uniform or non-uniform, depending on the geometry and solution requirements.

\item 
Basis Function Selection: Basis functions, typically piecewise linear or higher-order 
polynomials, are chosen to approximate the solution locally within each element.

\item 
Assembly: The global system of equations is assembled by combining the contributions 
from individual elements, ensuring continuity across element boundaries and applying 
boundary conditions.

\item 
Solution: The resulting system of algebraic equations is solved using numerical 
techniques to compute the approximate solution over the entire domain.
\end{enumerate}
The FEM is renowned for its versatility and adaptability. It can handle problems 
involving complex geometries, non-homogeneous materials, and arbitrary boundary conditions. 
It is particularly effective in areas like structural analysis, fluid dynamics, 
heat transfer, and electromagnetics. Moreover, its ability to adapt the mesh and 
basis functions to regions of high error allows for efficient and accurate solutions, 
even for problems with singularities or sharp gradients.

The FEM offers several advantages over the Finite Difference Method (FDM), 
making it a preferred choice in many engineering and scientific applications. 
Even FDM is simpler to implement and often computationally 
faster for simple, regular problems, the FEM excels in versatility, 
accuracy, and adaptability, especially for problems with complex geometries, 
boundary conditions, singularity, and material properties. These advantages make FEM a robust 
and widely adopted tool in many fields. Despite its strengths, FEM has challenges, 
including the need for high computational resources for large or three-dimensional 
problems and careful handling of mesh generation to ensure accuracy and stability. 
Nevertheless, it remains a cornerstone of numerical simulation and modeling 
in both academia and industry.

%For instance, Lagaris et al. \cite{Lagaris19981998} are early pioneers in applying 
%artificial neural networks to solve differential equations, constructing approximate 
%solutions for initial and boundary value problems. 
%Raissi et al. \cite{Raissi2019} introduce the physics-informed neural networks (PINNs), 

Due to its universal approximation property, the fully connected neural network (FNN) 
is the most widely used architecture to build the functions for solving PDEs \cite{LagarisLikasFotiadis}. 
There have developed several types of well-known FNN-based methods 
such as deep Ritz \cite{EYu}, deep Galerkin method \cite{DGM}, 
PINN \cite{RaissiPerdikarisKarniadakis}, and weak adversarial networks \cite{WAN}
for solving PDEs by designing different type of loss functions.  
Among these methods, the loss functions always include computing 
integration for the functions defined by FNN. 
For example, the loss functions of the deep Ritz method require computing 
the integration on the domain for the functions constructed by FNN.  
Always, these integration is computed using the Monte-Carlo 
method along with some sampling tricks \cite{EYu,HanZhangE}. 
Due to the low convergence rate of the Monte-Carlo method, the solutions obtained 
by the FNN-based numerical methods are challenging to achieve high accuracy and 
stable convergence process. This means that the Monte-Carlo method decreases 
computational work in each forward propagation while decreasing the simulation 
accuracy, efficiency and stability of the FNN-based numerical methods 
for solving PDEs. When solving nonhomogeneous 
boundary value problems, it is difficult to choose the number 
of sampling points on the boundary and in the domain. 
Furthermore, for solving non-homogeneous Dirichlet boundary value problems, besides 
the difficulty of choosing sampling points, it is also very difficult to set the 
hyperparameter to balance the loss from the boundary and interior domain.

Recently, \cite{LinWangXie} gives an error analysis framework 
for the NN based machine learning method for solving PDEs.  
This paper reveals that the integration error also controls 
the accuracy of the machine learning methods. Based on this 
conclusion, in order to improve the accuracy of the machine learning methods, 
we should use the quadrature schemes with high accuracy and high efficiency. 
Then, the deduced machine learning method can achieve high accuracy in solving PDEs. 
Even for high dimensional PDEs, we propose a type of tensor neural network (TNN) and 
the corresponding machine learning method, aiming to solve high-dimensional 
problems with high accuracy \cite{WangJinXie,WangLiaoXie,WangXie}. 
The reason of high accuracy of TNN based machine learning method is 
that the integration of TNN functions can be separated into one-dimensional 
integrations which can be computed by classical quadratures with high accuracy.  
The TNN-based machine learning method has already been used to solve high-dimensional 
eigenvalue problems and boundary value problems based on the Ritz type of loss functions. 
Furthermore,  in \cite{WangXie},  the multi-eigenpairs can also be computed with 
the machine learning method designed by combining the TNN and Rayleigh-Ritz process. 
The TNN is also used to solve 20,000 dimensional Schr\"{o}dinger equation 
with coupled quantum harmonic oscillator potential function \cite{HuShuklaKarniadakisKawaguchi}, 
high-dimensional Fokker-Planck equations \cite{WangHuKawaguchiZhangKarniadakis} 
and high-dimensional time-dependent problems \cite{KaoZhaoZhang}.  
\revise{We should also mention the subspace type of machine learning method, such as 
Random NN (RNN) \cite{DongLi1,DongLi2,HuangZhuSiew,LiWang,ShangWangSun,SunDongWang}, 
Random Feature  Mapping (RFM) \cite{ChenChiEYang}, 
Subspace Neural Network (SNN) \cite{LiuXuSheng, XuSheng}. 
In \cite{LinWangXie}, we design a type of adaptive subspace method for solving 
PDEs with high accuracy.}

For simplicity and easy understanding, in this paper, we are concerned 
with the following seconde order elliptic problems: Find $u(x)\in H_0^1(\Omega)$ such that 
\begin{equation}\label{Elliptic_Problem}
\left\{
\begin{array}{rcll}
-\nabla\cdot(\mathcal A\nabla u(x))+  b(x)u(x)&=&f(x), &  {\rm in}\ \Omega,\\
u&=&0, & {\rm on}\ \partial\Omega,
\end{array}
\right.
\end{equation} 
where $\Omega\subset \mathbb R^d$ is a Lipschitz domain and $d\leq 3$ in this paper, 
$H_0^1(\Omega)$ denotes the Sobolev space, $\mathcal A \in \mathbb R^{d\times d}$ 
is a symmetric positive definite matrix and the function $b(x)$ has a positive lower bound.
%\comm{Here, we will propose the machine learning type of numerical methods 
%for solving this boundary value problems}. 

\revise{In this paper, we design a new type of neural network (NN) element method 
for solving (\ref{Elliptic_Problem}) by combining the finite element mesh, 
piecewise polynomials basis functions and neural network to build the trial function space. 
The method here can combine the ability of FEM for complex geometries and 
strong approximation of NN. Furthermore, the deduced machine learning method 
has the similar efficiency and stability to the moving mesh method \cite{HuangRussell,LiTangZhang}. 
We use the mesh and corresponding finite element basis to handle the boundary 
value condition and the NN to enhance the approximation ability of 
the finite element basis functions.}  %\comm{The reasonable combination of finite element 
%basis and NN enables the approximation accuracy to be independent of the mesh size.} 
The corresponding error analysis is also provided for understanding the proposed 
method here.

\revise{Different from common training methods, the training step here is decomposed into 
two substeps including the linear solving or least square step for the coefficient 
and the optimization step for updating the neural networks \cite{WangLinLiaoLiuXie}. 
This separation scheme obviously improves the accuracy of concerned machine learning method. 
For the non-homogeneous boundary value conditions, the way in \cite{WangLinLiaoLiuXie} 
can be adopted for non-penalty terms with high accuracy.}

\revise{An outline of the paper follows. In Section \ref{Section_NN_Element}, 
we introduce the way to build the NN element space and error estimates 
of the NN element approximation. Section \ref{Section_Machine_Learning} 
is devoted to proposing the NN element based machine learning method for solving PDEs. 
Section \ref{Section_Numerical} gives some numerical examples to validate the accuracy and 
efficiency of the proposed NN element based machine learning method. 
Some concluding remarks are given in the last section.}

\section{Neural network element space and its basis}\label{Section_NN_Element}
The finite element mesh divides the domain into smaller, non-overlapping subdomains called 
elements (e.g., triangles, quadrilaterals, tetrahedra, or hexahedra), 
which collectively approximate the geometry of the domain. The mesh is a 
discretized representation of a geometric domain used in FEM 
for solving PDEs. The finite element space is a type of piecewise polynomial 
defined on the mesh. The basis of FEM has the local support which leads to 
the sparsity of the stiff matrices. This property is suitable for 
the modern high performance computers. 

\revise{In this section, we introduce the way to build the NN element basis 
by combing the finite element basis and NN functions. 
The deduced NN element space not only can handle the complex geometry and boundary value 
conditions, but also has the strong ability of expression. The application of NN functions 
makes the NN element space has strong ability of adaptivity for many singular PDEs}.

\subsection{Envelop function from finite element mesh}
The finite element mesh serves as the foundation for approximating the solution of PDEs. 
The solution is typically expressed as a piecewise polynomial function over the elements, 
and the accuracy of the approximation depends on both the quality of the mesh 
and the degree of the polynomial basis functions.  In this subsection, with the finite 
element mesh, we define some type of piecewise polynomials which will be 
used as the envelop functions for the NN element basis. 
As we know, the application of finite element mesh is to handle the complex
geometric domain and boundary value conditions, since the mesh can represent the 
geometry and the corresponding basis can express the boundary value conditions. 
Figure \ref{FEM_Mesh} shows an example of finite element mesh $\mathcal T_h$ 
on the unit square $[0,1]^2$. 
\begin{figure}[htb]
\centering
\includegraphics[width=8cm,height=6cm]{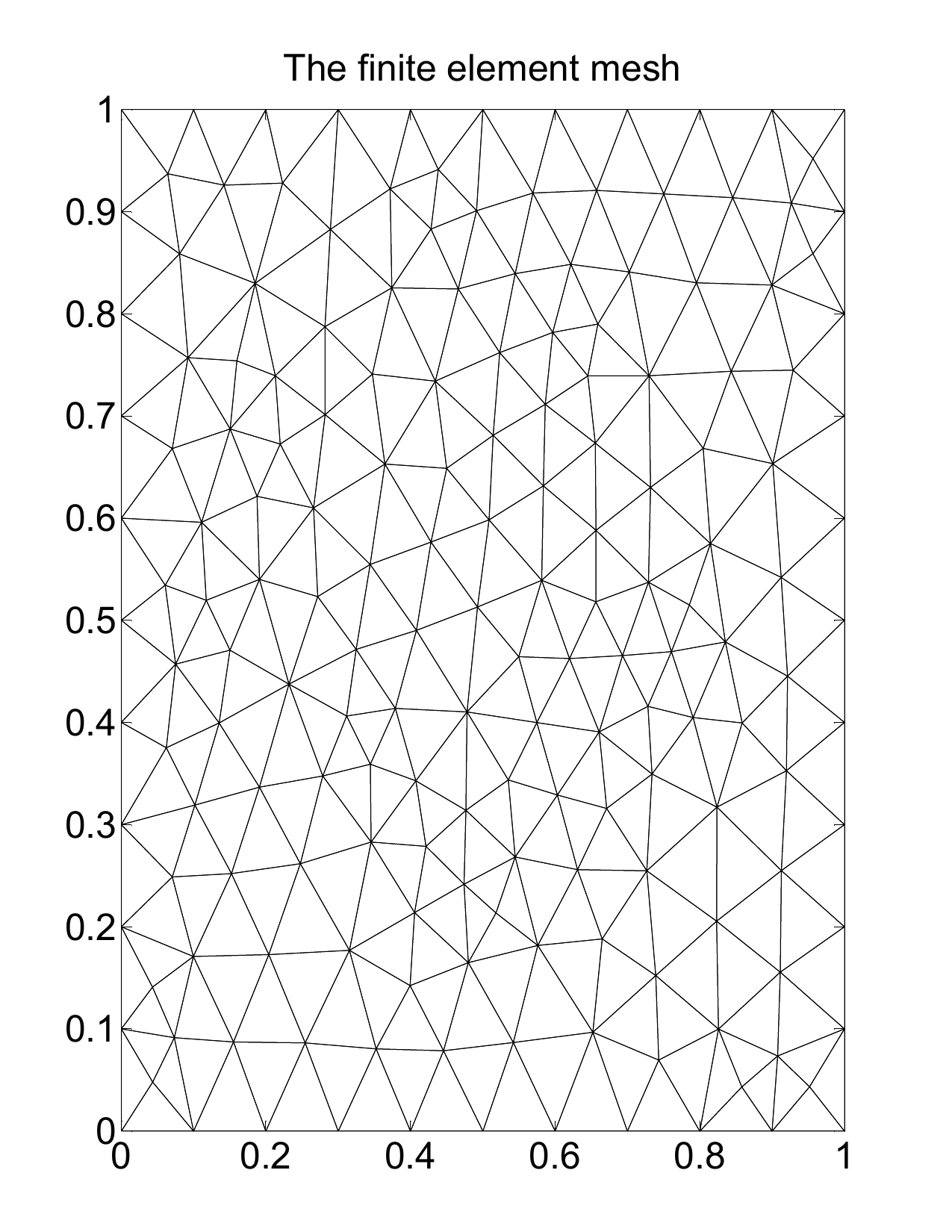}
\vskip-0.4cm 
\caption{The finite element mesh $\mathcal T_h$}\label{FEM_Mesh}
\end{figure}

In this paper, we assume the finite element mesh is regular 
\cite{brenner2007mathematical,ciarlet2002finite}. 
For the following description, \revise{let $\mathcal N_h$ and $\mathcal E_h$ 
denote the set of all vertices and edges of the mesh $\mathcal T_h$}.  
For easy understanding, we are mainly concerned with the triangulation $\mathcal T_h$ 
for the computing domain $\Omega$. Then the barycentric coordinates will be 
used in our description. The following part of this subsection is to 
introduce the local basis as the envelop functions corresponding to the vertex, edge 
and face, respectively.

We come to define the local basis as the envelop function for the vertex $Z\in\mathcal N_h$ 
of the finite element mesh $\mathcal T_h$. For this aim, we build the patch $\omega_Z$ 
associated with the vertex $Z$ by selecting all the elements which include $Z$ 
as one vertex (see Figure \ref{Figure_Node}).  Then the basis is defined as follows
\begin{eqnarray}\label{Envelop_Z}
\varphi_Z(x)|_{K_i} = \lambda_1,\ \ \ {\rm for}\  K_i\in \omega_Z,\ \forall Z\in\mathcal N_h,
\end{eqnarray}
where the vertex $Z$ is set to be the local first vertex in each $K_i\in \omega_Z$.  
\begin{figure}[ht]
\centering
\begin{tikzpicture}
\draw[help lines,color=gray!160,step=10pt,very thick, xshift =100pt,] (6,6) -- (5,8);
\draw[help lines,color=gray!160,step=10pt,very thick, xshift =100pt] (4,6) -- (9,6);
\draw[help lines,color=gray!160,step=10pt,very thick, xshift =100pt] (6,6) -- (8,8);
\draw[help lines,color=gray!160,step=10pt,very thick, xshift =100pt] (6,6) -- (5,4);
\draw[help lines,color=gray!160,step=10pt,very thick, xshift =100pt] (6,6) -- (7,4);
\draw[help lines,color=gray!160,step=10pt,very thick, xshift =100pt] (4,6) -- (5,8);
\draw[help lines,color=gray!160,step=10pt,very thick, xshift =100pt] (5,8) -- (8,8);
\draw[help lines,color=gray!160,step=10pt,very thick, xshift =100pt] (8,8) -- (9,6);
\draw[help lines,color=gray!160,step=10pt,very thick, xshift =100pt] (9,6) -- (7,4);
\draw[help lines,color=gray!160,step=10pt,very thick, xshift =100pt] (5,4) -- (7,4);
\draw[help lines,color=gray!160,step=10pt,very thick, xshift =100pt] (5,4) -- (4,6);
\node at (9.3,6.5) [right] {$Z$}; 
\end{tikzpicture}
\caption{The patch $\omega_Z$ for one node $Z$}\label{Figure_Node}
\end{figure}
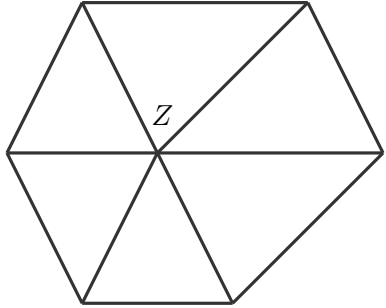

For any interior edge $E\in\mathcal E_h$, the corresponding patch $\omega_E$ includes 
two triangles denoted by $K_1$ and $K_2$, i.e. $\omega_E=K_1\cup K_2$ (see Figure \ref{Figure_Edge}). 
Then the corresponding basis as the envelop function is defined as follows  
\begin{eqnarray}\label{Envelop_E}
\varphi_E(x)|_{K_i} = \lambda_2\lambda_3,\ \ \ {\rm for}\ K_i\in \omega_E,\ \forall E\in \mathcal E_h.
\end{eqnarray}
\begin{figure}[ht]
\centering
\begin{tikzpicture}
\draw[help lines,color=gray!160,step=10pt,very thick, xshift =100pt] (6,6) -- (10,6);
\draw[help lines,color=gray!160,step=10pt,very thick, xshift =100pt] (6,6) -- (8,8);
\draw[help lines,color=gray!160,step=10pt,very thick, xshift =100pt] (6,6) -- (7,4);
\draw[help lines,color=gray!160,step=10pt,very thick, xshift =100pt] (8,8) -- (10,6);
\draw[help lines,color=gray!160,step=10pt,very thick, xshift =100pt] (10,6) -- (7,4);
\node at (11.2,5.8) [right] {$E$}; 
\node at (9,6) [right] {$2$}; 
\node at (13.5,6) [right] {$3$}; 
\node at (11.2,6.8) [right] {$K_1$}; 
\node at (10.5,5) [right] {$K_2$}; 
\node at (11.25,8.3) [right] {$1$}; 
\node at (10.27,3.7) [right] {$1$}; 
\end{tikzpicture}
\caption{The patch $\omega_E$ for one edge $E$ and the corresponding basis 
$\lambda_2\lambda_3$}\label{Figure_Edge}
\end{figure}
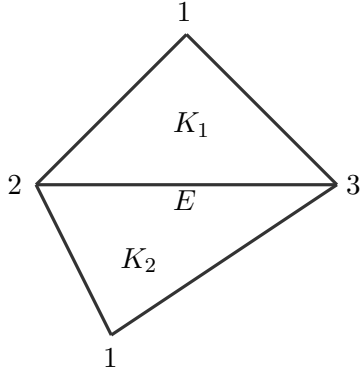

For any triangle $K\in \mathcal T_h$ with the vertices $Z_1$, $Z_2$ and $Z_3$ (see Figure \ref{Figure_Triangle}). 
Then the corresponding basis is defined as follows
\begin{eqnarray}\label{Envelop_K}
\varphi_K(x) = \lambda_1\lambda_2\lambda_3,\ \ \ \forall K\in \mathcal T_h.
\end{eqnarray}
\begin{figure}[ht]
\centering
\begin{tikzpicture}
\draw[help lines,color=gray!160,step=10pt,very thick, xshift =100pt] (6,6) -- (10,6);
\draw[help lines,color=gray!160,step=10pt,very thick, xshift =100pt] (6,6) -- (8.5,8.2);
\draw[help lines,color=gray!160,step=10pt,very thick, xshift =100pt] (8.5,8.2) -- (10,6);
\node at (11.45,6.8) [right] {$K$}; 
\node at (11.75,8.4) [right] {$3$}; 
\node at (9.1,6) [right] {$1$}; 
\node at (13.5,6) [right] {$2$}; 
\end{tikzpicture}
\caption{The triangle $K$ and the corresponding basis 
$\lambda_1\lambda_2\lambda_3$}\label{Figure_Triangle}
\end{figure}
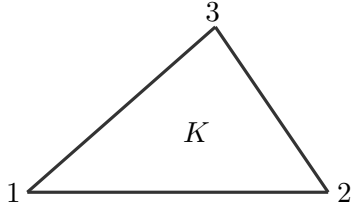

The local basis $\varphi_Z(x)$, $\varphi_E(x)$ and $\varphi_K(x)$ will act as the 
envelop functions multiplied by the NN function to build the NN element basis. 
For simplicity of notation and following description, we use 
$\varphi_1(x)$, $\cdots$, $\varphi_N(x)$ to denote  all the envelop functions on the 
finite element mesh $\mathcal T_h$ according to some type of order of the 
vertices, edges and triangles.  The corresponding patches corresponding to the 
vertices, edges and triangles are denoted by $\Omega_i$ according to the same order of 
$\varphi_1(x)$, $\cdots$, $\varphi_N(x)$ in the mesh $\mathcal T_h$, i.e., 
$\Omega_i:={\rm supp}(\varphi_i(x))$, where ${\rm supp}(f(x))$ denotes the support 
of the function $f(x)$. 
 
%---------------------------------------------------------------------------------------
\subsection{Neural network element basis}\label{Section_TNN_Archictecture}
This subsection is devoted to introducing the NN element trial space based on the 
envelop functions defined on the finite element mesh $\mathcal T_h$ and the local NN functions.  
In order to express clearly and facilitate the construction of the NN element method for 
solving PDEs, we will also introduce some important definitions and properties here.

Based on the finite element basis defined in the last subsection, the 
NN element basis functions corresponding to vertex, edge and triangle can be defined as follows
\begin{eqnarray}\label{Basis_Vertex}
\varphi_Z(x,\theta) = \varphi_Z(x)\cdot \phi_Z(x,\theta),\ \ \ \forall Z\in\mathcal N_h,
\end{eqnarray}
\begin{eqnarray}\label{Basis_Edge}
\varphi_E(x,\theta) = \varphi_E(x)\cdot \phi_E(x,\theta),\ \ \ \forall E\in\mathcal E_h,
\end{eqnarray}
and 
\begin{eqnarray}\label{Basis_K}
\varphi_K(x,\theta) = \varphi_K(x)\cdot \phi_K(x,\theta),\ \ \ \forall K\in\mathcal T_h, 
\end{eqnarray}
where $\varphi_Z(x)$, $\varphi_E(x)$ and $\omega_K(x)$ are defined 
by (\ref{Envelop_Z}), (\ref{Envelop_E}) and (\ref{Envelop_K}), 
$\phi_Z(x,\theta)$, $\phi_Z(x,\theta)$ and $\phi_K(x,\theta)$ denote 
the local NN functions on the patch $\omega_Z$, $\omega_E$ and $K$, respectively. 

Based on the construction way in (\ref{Basis_Vertex}), (\ref{Basis_Edge}) and (\ref{Basis_K}),  
the NN-element space $V_{\rm NNh}(\mathcal T_h,\theta)$ can be defined as follows
\begin{eqnarray}
V_{\rm NNh}(\mathcal T_h,\theta) &=& {\rm span}\Big\{\varphi_Z(x,\theta), \forall Z\in\mathcal N_h \Big\}
\bigcup{\rm span}\Big\{\varphi_E(x,\theta), \forall E\in\mathcal E_h \Big\}\nonumber\\
&&  \bigcup {\rm span}\Big\{\varphi_K(x,\theta), \forall K\in\mathcal T_h \Big\}.
\end{eqnarray}
It is easy to know that $V_{\rm NNh}(\mathcal T_h,\theta) \subset H_0^1(\Omega)$. 
For simplicity of notation, let $N:={\rm dim}(V_{\rm NNh}(\mathcal T_h, \theta))$ and $\varphi_1(x,\theta)$, 
$\cdots$, $\varphi_N(x,\theta)$ denote the basis of the space $V_{\rm NNh}(\mathcal T_h, \theta)$ 
according to the same order of the envelop functions $\varphi_1(x)$, $\cdots$, $\varphi_N(x)$. 
Then the function $\psi(x,c,\theta)\in V_{\rm NNh}$ can be expressed as the following 
linear combination with the coefficient vector $c\in \mathbb R^N$ 
\begin{eqnarray}
\psi(x,c,\theta) = \sum_{Z\in\mathcal N_h}c_Z\varphi_Z(x,\theta)+\sum_{E\in\mathcal E_h}c_E\varphi_E(x,\theta)
+\sum_{K\in\mathcal T_h}c_K\varphi_K(x,\theta)=\sum_{i=1}^Nc_i\varphi_i(x,\theta),
\end{eqnarray}
where $c$ denotes the vector with the elements $c_i$, $i=1, \cdots, N$.

In order to show the way here more clearly, we take the L shape domain and its mesh 
as an example. Figure \ref{Figure_4_Subdomains} shows the corresponding mesh and the corresponding 
vertices, edges and triangles for building the space $V_{\rm NNh}(\mathcal T_h,\theta)$.  
Here, we assume the problem has the homogeneous Dirichlet boundary condition. 
\begin{figure}[ht]
\centering
\begin{tikzpicture}
\node at (11.45,8.25) [right] {$Z_1$}; 
\node at (11.2,12.25) [right] {$Z_2$}; 
\node at (7.1,12.25)  [right] {$Z_3$}; 
\node at (6.8,8)      [right] {$Z_4$}; 
\node at (7.2,3.7)    [right] {$Z_5$}; 
\node at (11.25,3.7)  [right] {$Z_6$}; 
\node at (15.2,3.7)   [right] {$Z_7$}; 
\node at (15.2,8.25)  [right] {$Z_8$}; 
\node at (9.25,10.5)  [right] {$E_1$}; 
\node at (8.25,11)    [right] {$K_1$}; 
\node at (9.35,7.75)  [right] {$E_2$}; 
\node at (9.55,6)     [right] {$E_3$};
\node at (9.95,9.25)  [right] {$K_2$};  
\node at (9.95,6.9)   [right] {$K_3$}; 
\node at (12.5,6.9)   [right] {$K_5$};
\node at (11.4,6.2)   [right] {$E_4$}; 
\node at (8.25,5.25)  [right] {$K_4$}; 
\node at (13.6,6.05)  [right] {$E_5$}; 
\node at (14,5) [right] {$K_6$};
\draw[help lines,color=gray!160,step=10pt,very thick, xshift =100pt] (4,4)  -- (12,4);
\draw[help lines,color=gray!160,step=10pt,very thick, xshift =100pt] (12,4) -- (12,8);
\draw[help lines,color=gray!160,step=10pt,very thick, xshift =100pt] (8,12) -- (4,12);
\draw[help lines,color=gray!160,step=10pt,very thick, xshift =100pt] (4,12) -- (4,4);
\draw[help lines,color=gray!160,step=10pt,very thick, xshift =100pt] (8,4)  -- (8,8);
\draw[help lines,color=gray!160,step=10pt,very thick, xshift =100pt] (4,8)  -- (8,8);
\draw[help lines,color=gray!160,step=10pt,very thick, xshift =100pt] (8,8)  -- (8,12);
\draw[help lines,color=gray!160,step=10pt,very thick, xshift =100pt] (8,8)  -- (12,8);
\draw[help lines,color=gray!160,step=10pt,very thick, xshift =100pt] (4,8)  -- (8,12);
\draw[help lines,color=gray!160,step=10pt,very thick, xshift =100pt] (4,8)  -- (8,4);
\draw[help lines,color=gray!160,step=10pt,very thick, xshift =100pt] (12,8) -- (8,4);
\end{tikzpicture}
\caption{The mesh and corresponding vertices, edges and triangles for the 
L shape domain}\label{Figure_4_Subdomains}
\end{figure}
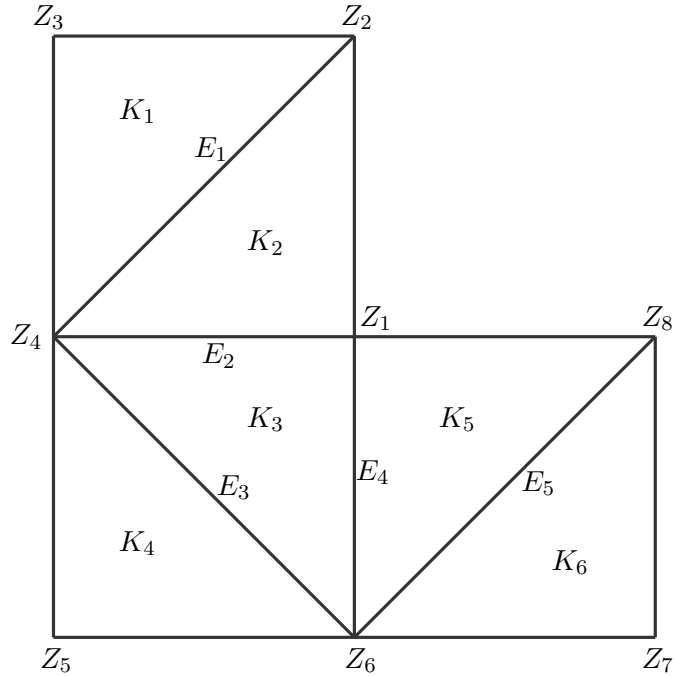
Since the vertices are all on the Dirichlet boundary, there is no free vertices and then 
$\mathcal N_h=\varnothing$. According to the Dirichlet boundary, it is easy to know 
that $\mathcal E_h = \{E_1, E_2, E_3, E_4,  E_5\}$ 
and $\mathcal T_h = \{K_1, K_2, K_3, K_4,  K_5, K_6\}$. 
Then we can build the NN element basis functions with the methods (\ref{Basis_Edge}) 
and (\ref{Basis_K}) and the NN element space is defined as follows
\begin{eqnarray*}
V_{\rm NNh}(\mathcal T_h,\theta) 
= {\rm span}\Big\{\varphi_{E_1}(x,\theta), \cdots, \varphi_{E_5}(x,\theta)\Big\} 
\bigcup {\rm span}\Big\{\varphi_{K_1}(x,\theta), \cdots, \varphi_{K_6}(x,\theta)\Big\}. 
\end{eqnarray*}
Of course, the space part ${\rm span}\Big\{\varphi_{K_1}(x,\theta), \cdots, \varphi_{K_6}(x,\theta)\Big\}$ 
according to the elements is not necessary. 

\subsection{Error estimates of NN element approximation}
In this subsection, we use the idea of partition of unity \cite{MelenkBabuska} to give 
the error analysis for the NN element approximation. For this aim, we define the 
following partition functions based on the envelop functions 
defined in (\ref{Basis_Vertex}), (\ref{Basis_Edge}) and (\ref{Basis_K}) on the mesh $\mathcal T_h$ 
\begin{eqnarray}\label{Partition_Function}
\psi_i(x) = \frac{\varphi_i(x)}{\sum_{j=1}^N\varphi_j(x)},\ \ \ \ i=1, \cdots, N.
\end{eqnarray}
Then it is easy to know 
\begin{eqnarray}
\sum_{i=1}^N \psi_i(x) = 1,
\end{eqnarray}
and the support of $\psi_i(x)$ is the same as $\varphi_i(x)$, $i=1, \cdots, N$.

For easy understanding, we take a triangle $K$ and its patch $\omega_K = \{K'| K\cap K' \neq \varnothing\}$ 
in Figure \ref{Figure_Patch} as an example 
to show the construction of the partition functions. 
According to the notation in Figure \ref{Figure_Patch}, 
the envelop functions on the triangle $K$ can be denoted and ordered as follows
\begin{eqnarray*}
&&\varphi_1(x) = \varphi_{Z_1}(x), \ \ \varphi_2(x) = \varphi_{Z_2}(x), \ \  \varphi_3(x) = \varphi_{Z_3}(x),   \ \ 
\varphi_4(x) = \varphi_{E_1}(x),\\
&&\varphi_5(x) = \varphi_{E_2}(x),  \ \ \varphi_6(x) = \varphi_{E_3}(x),  \ \ 
\varphi_7(x) = \varphi_{K}(x).
\end{eqnarray*}
\begin{figure}[ht]
\centering
\begin{tikzpicture}
\node at (9.7,7.2)   [right] {$K$};
\node at (11.2,8.5) [right] {$Z_1$};  
\node at (6.8,8.25)   [right] {$Z_2$}; 
\node at (11.45,3.75) [right] {$Z_3$};  
\node at (9.35,8.4)  [right] {$E_3$};  
\node at (8.7,6.5)     [right] {$E_1$};  
\node at (11.25,6.8)   [right] {$E_2$}; 
\draw[help lines,color=gray!160,step=10pt,very thick, xshift =100pt] (4.1,4.3)  -- (8,5);
\draw[help lines,color=gray!160,step=10pt,very thick, xshift =100pt] (10.5,4) -- (11,8);
\draw[help lines,color=gray!160,step=10pt,very thick, xshift =100pt] (8,11) -- (4.5,11);
\draw[help lines,color=gray!160,step=10pt,very thick, xshift =100pt] (8,5)  -- (10.5,4);
\draw[help lines,color=gray!160,step=10pt,very thick, xshift =100pt] (4,8)  -- (4.1,4.3);
\draw[help lines,color=gray!160,step=10pt,very thick, xshift =100pt] (8,5)  -- (7.7,8.3);
\draw[help lines,color=gray!160,step=10pt,very thick, xshift =100pt] (4,8)  -- (7.7,8.3);
\draw[help lines,color=gray!160,step=10pt,very thick, xshift =100pt] (7.7,8.3)  -- (8,11);
\draw[help lines,color=gray!160,step=10pt,very thick, xshift =100pt] (7.7,8.3)  -- (11,8);
\draw[help lines,color=gray!160,step=10pt,very thick, xshift =100pt] (4,8)  -- (8,11);
\draw[help lines,color=gray!160,step=10pt,very thick, xshift =100pt] (4,8)  -- (8,5);
\draw[help lines,color=gray!160,step=10pt,very thick, xshift =100pt] (11,8) -- (8,5);
\draw[help lines,color=gray!160,step=10pt,very thick, xshift =100pt] (11,8) -- (8,11);
\draw[help lines,color=gray!160,step=10pt,very thick, xshift =100pt] (4,8)  -- (4.5,11);
\draw[help lines,color=gray!160,step=10pt,very thick, xshift =100pt] (1.5,8)  -- (4,8);
\draw[help lines,color=gray!160,step=10pt,very thick, xshift =100pt] (1.5,8)  -- (4.1,4.3);
\draw[help lines,color=gray!160,step=10pt,very thick, xshift =100pt] (1.5,8)  -- (4.5,11);
\draw[help lines,color=gray!160,step=10pt,very thick, xshift =100pt] (8,3)  -- (8,5);
\draw[help lines,color=gray!160,step=10pt,very thick, xshift =100pt] (8,3)  -- (4.1,4.3);
\draw[help lines,color=gray!160,step=10pt,very thick, xshift =100pt] (8,3)  -- (10.5,4);
\end{tikzpicture}
\caption{The patch $\omega_K = \{K'| K\cap K' \neq \varnothing\}$ 
for the triangle $K$.}\label{Figure_Patch}
\end{figure}
%\begin{figure}[ht]
%\centering
%\begin{tikzpicture} 
%\node at (7.2,3.7)    [right] {$Z_1$}; 
%\node at (10.25,4.2)  [right] {$E_3$};  
%\node at (10.25,7.75)  [right] {$Z_3$};  
%\node at (10.3,5.7)  [right] {$K$}; 
%\node at (12,6.4)   [right] {$E_1$}; 
%\node at (8.5,6.25)  [right] {$E_2$}; 
%\node at (13.5,5) [right] {$Z_2$};
%\draw[help lines,color=gray!160,step=10pt,very thick, xshift =100pt] (4,4)  -- (10,5);
%\draw[help lines,color=gray!160,step=10pt,very thick, xshift =100pt] (10,5) -- (7,7.5);
%\draw[help lines,color=gray!160,step=10pt,very thick, xshift =100pt] (7,7.5) -- (4,4);
%\end{tikzpicture}
%\caption{The element $K$ and its vertices, edges}\label{Figure_Element}
%\end{figure}
Based on the local support $\varphi_i(x)$ and $\psi_i(x)$, the associated partition functions 
restricted on $K$ can be defined as follows 
\begin{eqnarray}
\psi_i(x)|_K = \frac{\varphi_i(x)|_K}{\sum_{j=1}^7 \varphi_j(x)},\ \ \ i=1, \cdots, 7.
\end{eqnarray}
It is easy to know 
\begin{eqnarray}
\left(\sum_{i=1}^N \psi_i(x)\right)\Big|_K = 1 
\end{eqnarray}
and the support for the partition functions $\psi_1(x)$, $\cdots$, $\psi_7(x)$ 
is the patch as in Figure \ref{Figure_Patch}.

%In conclusion, the approximation properties of NN element method are based on the facts that
%\begin{enumerate}
%\item (local approximability) a smooth function can be approximated locally by the neural network;
%
%\item (conformity of the NN element spaces/interelement continuity) NN spaces
%are big enough to absorb extra constraints of continuity across interelement boundaries 
%without loosing the approximation properties.
%\end{enumerate}
%Conversely, any system of functions which have good local approximation properties and
%can be constrained to satisfy some interelement continuity leads to a good finite element
%method. 

%Let us first elaborate the problem of local approximability. There are many works 
%to show that the NN functions have good local approximation properties. 
%%For certain types of equations,  
%%one can exploit the structure of the differential equation to construct spaces of functions
%%which can approximate the solution even better than the spaces of polynomials. 

%Let us now turn to the problem of conformity of the NN element space/interelement 
%continuity. It is well known that the NN has good local approximation properties 
%for the approximation of a solution $u$ of a differential equation. 
%In general, it is very difficult to enforce conformity, i.e., interelement continuity, 
%for the local NN approximation spaces. The PUNNEM, however, 
%offers a means to construct a conforming space out of any given system of local 
%approximation spaces without sacrificing the approximation properties. 
%This is done as follows. 

\revise{
The overlapping patches $\left\{\Omega_i\right\}$ cover the domain $\Omega$ and 
$\left\{\psi_i(x)\right\}$ be a partition of unity subordinate to the cover. 
On each patch $\Omega_i$,
let \comm{$V_i \coloneqq \{\phi_i(x;\theta_i)\ |\ \theta_i\in\Theta_i\} \subset H^1\left(\Omega_i\right)$}
denotes the local NN trial space. %by which $u|_{\Omega_i}$ can be approximated well. 
The global NN element space $V$ is then defined by $\sum_{i=1}^N \psi_i V_i$. 
For any function $u$, it is well known that $u|_{\Omega_i}$ can be approximated very well by the 
local NN trial space $V_i$. }

%enforces the conformity of the global space $V$.

%In this section, we present a method of constructing conforming subspaces of $H^1(\Omega)$. 
%We construct NN element spaces which are subspaces of $H^1(\Omega)$ 
%as an example because of their importance in applications. We would like 
%to stress at this point that the method leads to the construction of smoother 
%spaces (subspaces of $H^k, k>1$ ) or subspaces of Sobolev spaces $W^{k, p}$ 
%in a straightforward manner. 

%The main technical notion in the construction of 
%the PUNNEM spaces is the ($M, C_{\infty}, C_G$) partition of unity.
\begin{lemma}\label{Definition_1}
%Let $\Omega \subset \mathbb{R}^{n}$ be an open set, 
The cover $\{\Omega_i\}_{i=1}^N$ of $\Omega$ satisfies the following pointwise overlap condition
\begin{eqnarray}
\exists M \in \mathbb{N}, \quad \forall x \in \Omega, \quad 
\operatorname{card}\left\{K\in \mathcal T_h \mid x \in K\right\} \leqslant M. \label{Regular_Estimate}
\end{eqnarray}
Then $\left\{\psi_i\right\}_{i=1}^N$ is a Lipschitz partition of 
unity subordinate to the cover $\mathcal T_h$ satisfying
\begin{eqnarray}
&&\operatorname{supp} (\psi_i) \subseteq \Omega_i, \quad 1\leq i\leq N, \\
&&\sum_{i=1}^N \psi_i(x) \equiv 1 \quad \text {\rm on } \Omega, \\
&&\left\|\psi_i\right\|_{L^{\infty}\left(\mathbb R^n\right)} \leqslant C_{\infty},\quad i=1, \cdots, N,\\
&&\left\|\nabla \psi_i\right\|_{L^2\left(\mathbb{R}^n\right)} 
\leqslant \frac{C_G}{{\rm diam}(\Omega_i)}, \quad i=1, \cdots, N,
\end{eqnarray}
where ${\rm diam}(\Omega_i)$ denotes the diameter of $\Omega_i$, $C_{\infty}$ and $C_G$ are two constants 
independent of the mesh size. 
%Then $\left\{\psi_i\right\}$ is called a ($M, C_{\infty}, C_G$) partition of unity subordinate 
%to the cover $\left\{\Omega_i\right\}$. 
%The partition of unity $\left\{\psi_i\right\}$ is said to be of degree $m \in \mathbb{N}_0$ 
%if $\left\{\psi_i\right\} \subset C^m\left(\mathbb{R}^n\right)$. 
%The covering sets $\left\{\Omega_i\right\}$ are called patches.
\end{lemma}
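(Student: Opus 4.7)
The plan is to dispose of the two combinatorial properties first and then attack the quantitative bounds. The support condition $\operatorname{supp}(\psi_i)\subseteq\Omega_i$ and the identity $\sum_i \psi_i\equiv 1$ follow immediately from $\psi_i = \varphi_i/S$ with $S:=\sum_{j=1}^N\varphi_j$, provided one first shows $S(x)>0$ on $\Omega$. I would verify this elementwise: on $K\in\mathcal T_h$ with barycentric coordinates $\lambda_1,\lambda_2,\lambda_3$, the active envelopes are (at most) three vertex hats $\lambda_i$, three edge bubbles $\lambda_i\lambda_j$, and the element bubble $\lambda_1\lambda_2\lambda_3$. The last is strictly positive in the interior of $K$, and on fully interior elements the identity $\sum_i\lambda_i\equiv 1$ even gives $S|_K = 1 + (\lambda_1\lambda_2+\lambda_1\lambda_3+\lambda_2\lambda_3) + \lambda_1\lambda_2\lambda_3 \geq 1$. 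The $L^\infty$ bound is then a one-liner: non-negativity of every $\varphi_j$ forces $0\leq\varphi_i\leq S$, so $\psi_i\leq 1$ and $C_\infty=1$ works.

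For the gradient bound I would apply the quotient rule on each element $K\subset\Omega_i$,
\[
\nabla\psi_i=\frac{\nabla\varphi_i}{S}-\frac{\varphi_i\,\nabla S}{S^2}.
\]
Shape regularity of $\mathcal T_h$ yields $|\nabla\lambda_j|\leq C/h_K$, so each $|\nabla\varphi_j|\leq C/h_K$ since the envelopes are products of at most three of the $\lambda_k$. The overlap condition \eqref{Regular_Estimate} bounds the number of active envelopes on $K$ by a constant multiple of $M$, whence $|\nabla S|\leq CM/h_K$. Wherever $S$ is bounded below by a positive constant, these estimates combine to give $|\nabla\psi_i|\leq C'/h_K$ pointwise, and since $h_K\sim\operatorname{diam}(\Omega_i)$ by shape regularity, the desired bound follows by taking the supremum (or integrating) over $K\subset\Omega_i$.

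The main obstacle is the elements touching $\partial\Omega$. For a homogeneous Dirichlet problem the envelopes attached to boundary vertices and edges are dropped, so $S$ can vanish on $\bar K\cap\partial\Omega$ and the naive quotient bound breaks down there. I plan to handle this by writing $\psi_i$ as a simplified rational function of the $\lambda_k$ on each boundary element: the numerator $\varphi_i$ and denominator $S$ share common boundary factors, and after cancellation the quotient extends smoothly up to $\partial\Omega$ with gradient of order $1/h_K$. The triangle $K_1$ in Figure \ref{Figure_4_Subdomains} illustrates this situation: relabeling its vertices locally, $S|_{K_1}=\lambda_1\lambda_3(1+\lambda_2)$ vanishes on two sides of $K_1$, yet after cancellation $\psi_{E_1}|_{K_1}=(1+\lambda_2)^{-1}$ and $\psi_{K_1}|_{K_1}=\lambda_2(1+\lambda_2)^{-1}$ are smooth on $\bar K_1$ with gradients controlled by $|\nabla\lambda_2|\leq C/h_{K_1}$. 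A finite case analysis over the possible boundary configurations of an element then converts this observation into a uniform pointwise bound, completing the proof.
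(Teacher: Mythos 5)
Your argument for interior elements is correct and, for the record, more substantive than the paper's own ``proof,'' which is a one-sentence citation to the regularity of the mesh and to \cite{MelenkBabuska}. The support, sum-to-one, and $L^\infty$ properties are handled cleanly ($S\geq 1$ on interior elements via $\sum_k\lambda_k=1$, and $0\leq\varphi_i\leq S$ gives $C_\infty=1$), and the quotient-rule estimate is the right tool wherever $S$ is bounded below.

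The gap is in the final step, the ``finite case analysis over the possible boundary configurations.'' Your cancellation trick works for the configuration you chose ($K_1$ in Figure \ref{Figure_4_Subdomains}, which has exactly \emph{one} interior edge, so $S|_{K_1}=\lambda_A\lambda_B(1+\lambda_C)$ and the reduced denominator $1+\lambda_C$ is bounded below by $1$), but it fails for the very next element in the same figure. Take $K_2$ with vertices $A=Z_4$, $B=Z_3$, $C$ the re-entrant corner: its two interior edges $E_1=AB$ and $E_2=AC$ meet at the boundary vertex $A$, whose vertex envelope has been dropped. Then
\begin{equation*}
S|_{K_2}=\lambda_A\lambda_B+\lambda_A\lambda_C+\lambda_A\lambda_B\lambda_C
=\lambda_A\bigl(\lambda_B+\lambda_C+\lambda_B\lambda_C\bigr),
\qquad
\psi_{E_1}|_{K_2}=\frac{\lambda_B}{\lambda_B+\lambda_C+\lambda_B\lambda_C},
\end{equation*}
and the reduced denominator still vanishes at $A$. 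Near $A$ the quotient behaves like $\lambda_B/(\lambda_B+\lambda_C)$, which has no limit at $A$ and satisfies $|\nabla\psi_{E_1}|\sim 1/\rho$ with $\rho$ the distance to $A$; this is not $O(1/h_K)$, is not Lipschitz, and $\int_{K_2}|\nabla\psi_{E_1}|^2\sim\int_0^{h}\rho^{-1}\,d\rho$ even diverges logarithmically, so the stated gradient bound fails in both the $L^\infty$ and the $L^2$ sense. The same happens at any boundary vertex of a triangle from which two or more interior edges emanate, which is the generic situation, not an exceptional one. To close the gap you need either a modified normalization (e.g., keep \emph{all} envelope functions, including the boundary ones, in the sum $S$, so that $S\equiv 1+\cdots\geq 1$ on every element and your interior argument applies globally) or an explicit hypothesis excluding this configuration; as written, the case analysis cannot ``convert the observation into a uniform pointwise bound.''
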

\begin{proof}
The proof can be given by using the regularity of the mesh $\mathcal T_h$ and the property of the 
envelop functions defined in this paper \cite{brenner2007mathematical,ciarlet2002finite,MelenkBabuska}. 
\end{proof}

%\begin{definition}\label{Definition_2}
%Let $\left\{\Omega_i\right\}$ be an open cover of $\Omega \subset \mathbb{R}^n$ 
%and let $\left\{\psi_i\right\}$ be a $\left(M, C_{\infty}, C_G\right)$ partition of unity subordinate 
%to $\left\{\Omega_i\right\}$. Let $V_i \subset H^1\left(\Omega_i \cap \Omega\right)$ be the NN trial space (set). 
%Then the space
%$$
%V_{\rm NNh}:=\sum_{i=1}^N \psi_i \left(c_i\phi_i(x,\theta)\right)
%=\left\{\sum_{i=1}^N \psi_i v_i \Big| v_i \in V_i\right\} \subset H^1(\Omega)
%$$
%is called the partition of unity of NN element space (PUNNEM). 
%The PUNNEM space $V$ is said to be of degree $m \in \mathbb{N}$ if $V \subset C^m(\Omega)$. 
%The spaces $V_i$ are referred to as the local NN element approximation spaces.
%\end{definition}

Then following the idea from \cite[Theorem 2.1]{MelenkBabuska}, we can give the error estimates for 
the NN element approximation. 
\begin{theorem}\label{Theorem_Error_Estimates}
%Let $\Omega \subset \mathbb{R}^n$ be given. 
%Let $\left\{\Omega_i\right\},\left\{\psi_i\right\}$ and $\left\{V_i\right\}$ 
%be as in Definitions \ref{Definition_1} and \ref{Definition_2}. 
Let $u \in H^1(\Omega)$ be the function to be approximated. 
Assume that the local approximation spaces $V_i$ have 
the following approximation properties: 
On each patch $\Omega_i \cap \Omega$, $u$ can be approximated by a \comm{NN $v_i \in V_i$} such that
\begin{eqnarray}
&& \left\|u-v_i\right\|_{L^2(\Omega_i \cap \Omega)} \leqslant \varepsilon_1(i), \label{Error_L2_Local}\\
&& \left\|\nabla\left(u-v_i\right)\right\|_{L^2(\Omega_i\cap \Omega)} 
\leqslant \varepsilon_2(i).\label{Error_H1_Local}
\end{eqnarray}
Then the NN element \comm{approximation}
$$
u_{\rm NNh}=\sum_{i=1}^N \psi_i v_i \in V \subset H^1(\Omega)
$$
has following error estimates 
\begin{eqnarray}
&& \left\|u-u_{\rm NNh}\right\|_{L^2(\Omega)} \leqslant \sqrt{M} C_{\infty}
\left(\sum_{i=1}^N \varepsilon_1^2(i)\right)^{1/2},\label{Error_Estimate_1} \\
&& \left\|\nabla\left(u-u_{\rm NNh}\right)\right\|_{L^2(\Omega)} 
\leqslant \sqrt{2 M}\left(\sum_{i=1}^N\left(\frac{C_G}{\operatorname{diam}(\Omega_i)}\right)^2 
\varepsilon_1^2(i)+C_{\infty}^2 \varepsilon_2^2(i)\right)^{1/2}.\label{Error_Estimate_2}
\end{eqnarray}
\end{theorem}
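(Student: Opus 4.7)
The plan is to follow the standard partition of unity trick from Melenk--Babu\v{s}ka \cite{MelenkBabuska}. The starting point is the identity $\sum_{i=1}^N \psi_i(x)\equiv 1$ on $\Omega$, which lets me rewrite
\begin{equation*}
u - u_{\rm NNh} \;=\; \sum_{i=1}^N \psi_i\,u - \sum_{i=1}^N \psi_i v_i \;=\; \sum_{i=1}^N \psi_i\,(u-v_i).
\end{equation*}
So the global error is localized into a weighted sum of the local approximation errors $u-v_i$, each supported in $\Omega_i$ by Lemma \ref{Definition_1}.

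For the $L^2$ estimate, I would square pointwise and invoke the pointwise overlap bound \eqref{Regular_Estimate}: at each $x\in\Omega$ at most $M$ terms of the sum are nonzero, so by Cauchy--Schwarz
\begin{equation*}
\Bigl|\sum_{i=1}^N \psi_i(x)(u-v_i)(x)\Bigr|^2 \leq M\sum_{i=1}^N \psi_i(x)^2|u-v_i|^2(x).
\end{equation*}
Integrating over $\Omega$, using $\|\psi_i\|_{L^\infty}\leq C_\infty$ from Lemma \ref{Definition_1} and the local $L^2$ bound \eqref{Error_L2_Local}, yields \eqref{Error_Estimate_1} directly after taking square roots.

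For the $H^1$ estimate, I would differentiate the identity to obtain
\begin{equation*}
\nabla(u-u_{\rm NNh}) \;=\; \sum_{i=1}^N \bigl[\nabla\psi_i\,(u-v_i) + \psi_i\,\nabla(u-v_i)\bigr],
\end{equation*}
then apply the elementary inequality $(a+b)^2\leq 2a^2+2b^2$ together with the same pointwise overlap argument to bound the square of the sum by $2M$ times the sum of squares of each term. Integrating and invoking the bounds $\|\psi_i\|_{L^\infty}\leq C_\infty$, $\|\nabla\psi_i\|_{L^2}\leq C_G/\mathrm{diam}(\Omega_i)$ from Lemma \ref{Definition_1} together with \eqref{Error_L2_Local}--\eqref{Error_H1_Local} delivers \eqref{Error_Estimate_2}.

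There is no real obstacle; the only point that needs some care is the finite overlap counting, used twice to convert $\bigl(\sum_{i=1}^N a_i\bigr)^2$ into $M\sum_{i=1}^N a_i^2$ only for those indices $i$ with $x\in\Omega_i$. The fact that the test function $u_{\rm NNh}$ lies in $V\subset H^1(\Omega)$ follows from the construction of the envelop functions and the definition of $V_{\rm NNh}(\mathcal{T}_h,\theta)\subset H^1_0(\Omega)$, and requires no additional argument beyond noting that each $\psi_i v_i$ inherits compact support in $\Omega_i$ and Lipschitz regularity from $\psi_i$.
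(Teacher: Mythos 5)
Your proposal is correct and follows essentially the same route as the paper's own proof: localize the error via $\sum_{i=1}^N\psi_i\equiv 1$, split the gradient using $(a+b)^2\le 2a^2+2b^2$, convert each sum into $M$ times the sum of squares by the finite-overlap counting, and then invoke the bounds on $\psi_i$ and $\nabla\psi_i$ from Lemma \ref{Definition_1} together with \eqref{Error_L2_Local}--\eqref{Error_H1_Local}. The only difference is that you write out the $L^2$ case explicitly, which the paper dismisses as ``proved similarly.''
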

\begin{proof}
We will only show estimate (\ref{Error_Estimate_2}) because (\ref{Error_Estimate_1}) is proved similarly. 
Let $u_{\rm NNh}$ be defined as in the statement of the theorem. 
Since the functions $\psi_i$ form a partition of unity, 
we have $1 \cdot u=\left(\sum_{i=1}^N \psi_i\right) u=$ $\sum_{i=1}^N \psi_i u$ and thus
\begin{eqnarray}\label{Inequality_1}
\left\|\nabla\left(u-u_{\rm NNh}\right)\right\|_{L^2(\Omega)}^2 
&=&\left\|\nabla \sum_{i=1}^N \psi_i\left(u-v_i\right)\right\|_{L^2(\Omega)}^2 \nonumber\\
&\leqslant& 2\left\|\sum_{i=1}^N \nabla \psi_i\left(u-v_i\right)\right\|_{L^2(\Omega)}^2
+2\left\|\sum_{i=1}^N \psi_i \nabla\left(u-v_i\right)\right\|_{L^2(\Omega)}^2.
\end{eqnarray}

Since there are not more than $M$ patches overlap in any given point $x \in \Omega$, 
the sums $\sum_{i=1}^N \nabla \psi_i\left(u-v_i\right)$ and $\sum_{i=1}^N \psi_i \nabla\left(u-v_i\right)$ 
also contain at most $M$ terms for any fixed $x \in \Omega$. 
Thus, the following inequalities 
\begin{eqnarray}\label{Inequality_2}
\left|\sum_{i=1}^N \nabla \psi_i\left(u-v_i\right)\right|^2 
\leqslant M \sum_{i=1}^N \left|\nabla \psi_i\left(u-v_i\right)\right|^2,
\end{eqnarray}
and
\begin{eqnarray}\label{Inequality_3}
\left|\sum_{i=1}^N \psi_i \nabla\left(u-v_i\right)\right|^2 
\leqslant M \sum_{i=1}^N\left|\psi_i \nabla\left(u-v_i\right)\right|^2,
\end{eqnarray}
holds  for any $x \in \Omega$. 
  
With the fact $\operatorname{supp}(\psi_i) \subseteq \Omega_i$, 
the following estimates hold
\begin{eqnarray}\label{Inequality_4}
&& 2\left\|\sum_{i=1}^N \nabla \psi_i\left(u-u_i\right)\right\|_{L^2(\Omega)}^2
+2\left\|\sum_{i=1}^N \psi_i \nabla\left(u-v_i\right)\right\|_{L^2(\Omega)}^2 \\
&& \quad \leqslant 2 M \sum_{i=1}^N\left\|\nabla \psi_i\left(u-v_i\right)\right\|_{L^2(\Omega)}^2
+2 M \sum_{i=1}^N\left\|\psi_i \nabla\left(u-v_i\right)\right\|_{L^2(\Omega)}^2 \\
&& \quad \leqslant 2 M \sum_{i=1}^N\left\|\nabla \psi_i
\left(u-v_i\right)\right\|_{L^2\left(\Omega_i \cap \Omega\right)}^2
+2 M \sum_{i=1}^N\left\|\psi_i \nabla\left(u-v_i\right)\right\|_{L^2\left(\Omega_i \cap \Omega\right)}^2\\
&& \quad \leqslant 2 M \sum_{i=1}^N
\left(\frac{C_G^2}{\left(\operatorname{diam}(\Omega_i)\right)^2} \varepsilon_1^2(i)
+C_x^2 \varepsilon_2^2(i)\right). 
\end{eqnarray}
Then the desired result (\ref{Error_Estimate_2}) can be deduced by combining (\ref{Inequality_1}), 
(\ref{Inequality_2}), (\ref{Inequality_3}) and (\ref{Inequality_4}) and the proof is complete.  
\end{proof}

It is well known that the NN functions have good local approximation properties which means 
$\varepsilon_1(i)$ and $\varepsilon_2(i)$ can be very small even for small scale NN systems.
Theorem \ref{Theorem_Error_Estimates} shows that the global space $V$ inherits the approximation 
properties of the local spaces $V_i$, i.e., the function $u$ can be approximated 
on $\Omega$ by functions of $V$ as well as the functions $u|_{\Omega_i}$ 
can be approximated in the local spaces $V_i$. The conformity 
or interelement continuity of the NN element space inherit from the application of 
the envelop functions defined on the finite element mesh, which enforces the interelement 
continuity to construct a conforming space out of the local NN spaces 
without sacrificing the approximation properties.

Although the finite-dimensional subspace $V_{\rm NNh}(\mathcal T_h,\theta)$ here 
is constructed based on a finite element mesh, 
the use of neural networks (NNs) allows the accuracy of the approximate solution to 
reach the level of NN approximation without depending on the mesh size. 
The primary reason for using a finite element mesh is to handle complex 
geometric domains, while the neural network is the fundamental factor for improving 
the approximation accuracy. From this perspective, the NN element method effectively 
combines the finite element mesh's ability for handling complex geometries 
and the strong approximation capability of neural networks. 
This synergy enables neural network-based machine learning algorithms 
to solve a broader range of problems arising from engineering applications. 
This is the primary reason and objective behind designing 
the algorithm presented in this paper.

\comm{
\begin{remark}
Theorem \ref{Theorem_Error_Estimates} obtains the upper bound of the NN element approximation errors from the local approximation property of NN.
It is worth mentioning that if the Lipschitz partition of unity $\{\psi_i\}_{i=1}^N$ is the Lagrange finite element basis function of order $k$, the following error estimates based on the finite element method also holds
\begin{eqnarray}
&&\left\|u-u_{\rm NNh}\right\|_{L^2(\Omega)} \leq Ch^k,\\
&&\left\|\nabla\left(u-u_{\rm NNh}\right)\right\|_{L^2(\Omega)} \leq Ch^{k+1}.
\end{eqnarray}
This is because common NNs, such as FNN, can represent constant functions.
However, for the numerical stability of the NN training process, we recommend adding constant functions to each local NN trial space, that is $V_i \coloneqq\{1\}\cup \{\phi_i(x;\theta_i)\ |\ \theta_i\in\Theta_i\}$.
\end{remark}
}

\section{NN element based machine learning method}\label{Section_Machine_Learning}
In this section, we introduce the machine learning method by using the NN element 
space $V_{\rm NNh}(\mathcal T_h,\theta)$ and optimization process for some type of loss functions. 
For simplicity of notation, we also use $V_{\rm NNh}$ to denote $V_{\rm NNh}(\mathcal T_h,\theta)$ here.

The same as to the finite element method, based on the NN element space $V_{\rm NNh}$, we can define the 
corresponding Galerkin scheme as follows: Find $u_h\in V_{\rm NNh}$ such that 
\begin{eqnarray}\label{NN_Element_Problem}
a(u_h, v_h) = (f,v_h),\ \ \ \forall v_h\in V_{\rm NNh},
\end{eqnarray}
where 
\begin{eqnarray}\label{Bilinear_Form}
a(w,v) = \int_\Omega\left( \mathcal A\nabla w\cdot\nabla v + bwv\right)d\Omega,\ \ \ 
(f,v)=\int_\Omega fvd\Omega,\ \ \forall w, v\in H_0^1(\Omega). 
\end{eqnarray}

Based on the subspace approximation theory from the finite element method \cite{brenner2007mathematical, ciarlet2002finite}, the following optimal approximation property holds 
\begin{eqnarray}\label{Optimal_Approximation}
\|u-u_h\|_a = \inf_{v_h\in V_{\rm NNh}(\mathcal T_h,\theta)}\|u-v_h\|_a. 
\end{eqnarray}
%In the following part of this section, we use $\Psi(x,\theta)$

From (\ref{Optimal_Approximation}), the strong approximation ability of NN can improve the accuracy 
and approximation efficiency for the NN element method. 
Since there is a background mesh $\mathcal T_h$, we can also use the techniques 
from the finite element method to assemble the 
stiffness matrix and right hand side term of the equation (\ref{NN_Element_Problem}).  
Given the basis functions of the NN element space, 
we can construct the algebraic form of the discrete equation, namely, 
assemble the stiffness matrix and the right-hand side. 
Since there is also a finite element mesh involved, we can adopt the 
element-wise assembly procedure of the finite element method 
to build the stiffness matrix and the right-hand side.

When assembling the stiffness matrix and the right-hand side, we assume that the NN element space is fixed. 
After obtaining the NN element solution, we can update the entire NN element space 
with the adaptivity steps for a specific loss function to further 
improve the approximating accuracy. % of the NN element solution. 
We refer to this process as the training step of the machine learning 
algorithm for the NN element. 

Fortunately, considering the training process of machine learning method, we can find that this training process 
is naturally an adaptive process for the NN element space. After the $\ell$-th training step, 
the corresponding NN element space is 
\begin{eqnarray}\label{def_Vpl}
V_{\rm NNh}(\mathcal T_h, \theta^{(\ell)}):=\operatorname{span}\left\{\varphi_{j}(x ; \theta^{(\ell)}), 
\ \ \ j=1, \cdots, N\right\}.
\end{eqnarray}
Then, the parameters of the $\ell+1$-th step are updated according 
to the optimization step with a loss function $\mathcal{L}$. 
If the gradient descent method is used, this update can be expressed as follows
\begin{eqnarray}\label{Optimization_Step}
\theta^{(\ell+1)} \leftarrow \theta^{(\ell)} - 
\gamma \frac{\partial\mathcal{L}}{\partial \theta},
\end{eqnarray}
where $\gamma$ denotes the learning rate.
Note that updating parameter $\theta^{(\ell)}$ essentially updates the subspace 
$V_{\rm NNh}(\mathcal T_h, \theta^{(\ell+1)})$. 
In other words, in the training process, an optimal $N$-dimensional subspace 
$V_{\rm NNh}(\mathcal T_h, \theta^{(\ell+1)})$ 
can be selected adaptively according to the loss function. The definition of the loss function determines 
how the subspace is updated and whether the algorithm can ultimately achieve high precision.

After updating the NN element subspace, we can continue solving the equation \eqref{Optimal_Approximation} 
in the new NN element space and then do the iteration until stoping. 
The corresponding numerical method can be defined in Algorithm \ref{Algorithm_NNElement}, 
where the output $\Psi(x;c^{(M)}, \theta^{(M)})$ is the NN element approximation to the 
equation (\ref{Elliptic_Problem}). 
\begin{algorithm}[htb!]
\caption{NN element-based method for homogeneous boundary value problem}\label{Algorithm_NNElement}
\begin{enumerate}
\item Initialization step: Generate the finite element mesh $\mathcal T_h$ 
and build the NN element basis and corresponding NN element space $V_{\rm NNh}(\mathcal T_h, \theta^{(0)})$. 
Set the maximum training steps $M$, learning rate $\gamma$ and $\ell=0$.

\item %Define the $p$-dimensional space \revise{$V_{0,p}^{(\ell)}$} as follows
%\begin{eqnarray*}
%\revise{V_{0,p}^{(\ell)}}:={\rm span}\left\{\varphi_{j}(x;\theta^{(\ell)})
%:=\prod_{i=1}^{d}\widehat\phi_{i,j}
%\left(x_{i};\theta_{i}^{(\ell)}\right)(x_i-a_i)(b_i-x_i), j=1, \cdots, p\right\}.
%\end{eqnarray*}
Assemble the stiffness matrix $A^{(\ell)}$ and the right-hand side term 
$B^{(\ell)}$ on $V_{\rm NNh}(\mathcal T_h,\theta^{(\ell)})$. The entries are defined as follows
\begin{eqnarray*}
&&A_{m,n}^{(\ell)}=a(\varphi_n^{(\ell)},\varphi_m^{(\ell)})
=(\mathcal A\nabla\varphi_n^{(\ell)},\nabla\varphi_m^{(\ell)})+(b\varphi_n^{(\ell)},\varphi_m^{(\ell)}), 
\ 1\leq m,n\leq N, \\
&&B_m^{(\ell)}=(f,\varphi_m^{(\ell)}),\ 1\leq m\leq N.
\end{eqnarray*}

\item Solve the following linear 
 equation to obtain the solution $c\in\mathbb R^{N\times 1}$
\begin{eqnarray*}
A^{(\ell)}c=B^{(\ell)}.
\end{eqnarray*}
Update the coefficient parameter as $c^{(\ell+1)}=c$.
Then the Galerkin approximation on the space $V_{\rm NNh}(\mathcal T_h,\theta^{(\ell)})$ for problem 
(\ref{Elliptic_Problem}) is $\Psi(x;c^{(\ell+1)},\theta^{(\ell)})$.

\item Compute the loss function $\mathcal L^{(\ell+1)}(c^{(\ell+1)},\theta^{(\ell)})$ for the current 
approximation $\Psi(x;c^{(\ell+1)},\theta^{(\ell)})$. 
%\begin{eqnarray*}
%\mathcal L^{(\ell+1)}(c^{(\ell+1)},\theta^{(\ell)}):=
%\eta(\Psi(x;c^{(\ell+1)},\theta^{(\ell)}),\nabla\Psi(x;c^{(\ell+1)},\theta^{(\ell)})).
%\end{eqnarray*}

\item Update the neural network parameter $\theta^{(\ell)}$  as follows 
\begin{eqnarray*}
\theta^{(\ell+1)}=\theta^{(\ell)}-\gamma\frac{\partial\mathcal L^{(\ell+1)}}{\partial\theta}
(c^{(\ell+1)},\theta^{(\ell)}).
\end{eqnarray*}
Then the NN element space is updated to $V_{\rm NNh}(\mathcal T_h,\theta^{(\ell+1)})$. 

\item Set $\ell=\ell+1$ and go to Step 2 for the next step until $\ell=M$.

\end{enumerate}
\end{algorithm}

The NN element method, defined by Algorithm \ref{Algorithm_NNElement}, combines 
the strong feasibility of finite element method and strong approximation ability of 
the NN functions. In Algorithm \ref{Algorithm_NNElement},  Steps 2-3 are designed 
based on the idea of finite dimensional approximation which comes from the finite element method, 
Steps 4-5 implement the training process to optimize the finite dimensional NN element space 
by updating the NN parameters $\theta^{(\ell)}$.  
Based on the fixed number of parameters, the training steps improve the quality of the 
finite dimensional NN element space without moving the mesh $\mathcal T_h$. 
This means NN element based machine learning method has the same effect as the 
moving mesh methods \cite{HuangRussell,LiTangZhang} 
but without the technique difficulties of mesh moving.

\comm{
\begin{remark}
Like the classical finite element method, NNEM can satisfy the homogeneous Dirichlet boundary conditions by modifying the stiffness matrix and the right-hand side term calculated in step 2 of Algorithm \ref{Algorithm_NNElement}.
More specifically, the stiffness matrix is modified by setting the main 
diagonal entries to 1 and all other entries to 0 in rows and columns 
corresponding to the boundary basis functions.
And set the corresponding rows in the right-hand side term to be 0.

For non-homogeneous Dirichlet boundary conditions $u|_{\partial\Omega}=g$, we divide all NN element basis $\{\psi_i\}$ into internal NN element basis $\{\psi_i^{{\rm in}}\}$ and boundary NN element basis $\{\psi_i^{{\rm bd}}\}$.
First, solve the following NN element approximation on the boundary mesh degenerated by $\mathcal T_h$: Find $c^{\rm bd}=(c^{\rm bd}_1,\cdots,c^{\rm bd}_{N_{\rm bd}})^\top$, satisfying
\begin{eqnarray*}
Dc^{\rm bd}=G,
\end{eqnarray*}
where
\begin{eqnarray*}
D_{i,j}=(\psi_j^{\rm bd},\psi_i^{\rm bd})_{\partial\Omega},\ \ \ G_i=(g,\psi_i^{\rm bd})_{\partial\Omega},\ \ \ i,j=1,\cdots,N_{\rm bd}.
\end{eqnarray*}
Then the internal approximation of the original problem can be obtained by solving the following problem: Find $c^{\rm in}=(c^{\rm in}_1,\cdots,c^{\rm in}_{N_{\rm in}})^\top$, satisfying
\begin{eqnarray*}
Ac^{\rm in}=f-b^Tc^{\rm bd},
\end{eqnarray*}
where
\begin{eqnarray}
&&A_{ij}=a(\psi_j^{\rm in},\psi_i^{\rm in}),\ \ \ f_i=(f,\psi_i^{\rm in}),\ \ \ i,j=1,\cdots,N_{\rm in},\\
&&b_{j,i}=a(\psi_i^{\rm in},\psi_j^{\rm bd}),\ \ \ i=1,\cdots,N_{\rm in},\ j=1,\cdots,N_{\rm bd}.
\end{eqnarray}
Then the NN approximation of the original non-homogeneous problem is
\begin{eqnarray*}
u_{\rm NNh}=\sum_{i=1}^{N_{\rm in}}c^{\rm in}_i\psi_i^{\rm in}+\sum_{j=1}^{N_{\rm bd}}c^{\rm bd}_j\psi_j^{\rm bd}.
\end{eqnarray*}
\end{remark}
}

\comm{
\begin{remark}
The loss function in step 4 of Algorithm \ref{Algorithm_NNElement} can be any reasonable form derived from the target PDEs.
For numerical experiments of Section \ref{Section_Numerical}, 
we use the following energy functional discretized under the NN element basis
\begin{eqnarray*}
\mathcal L^{(\ell+1)}(c^{(\ell+1)},\theta^{(\ell)})=\frac{1}{2}(c^{(\ell+1)})^TA^{(\ell)}c^{(\ell+1)}-B^{(\ell)}c^{(\ell+1)}.
\end{eqnarray*}
A loss function based on a posterior error estimation like \cite{WangLinLiaoLiuXie} 
can also be used in the fourth step, which is easy to implement in the framework of NNEM.
We will have a more detailed discussion in future work.
\end{remark}
}

\section{Numerical examples}\label{Section_Numerical} 
In this section, we provide a numerical  example to validate the efficiency and accuracy 
of the proposed NN element method, Algorithm \ref{Algorithm_NNElement}. 
\comm{All the experiments are done on a NVIDA GeForce RTX 4090D GPU}. 

In order to show the %convergence behavior and 
accuracy of the Dirichlet boundary 
value problem, %and Neumann boundary value problem, 
we define the following errors for the NN element  solution $\Psi(x;c^*,\theta^*)$
\begin{eqnarray*}
e_{L^2}:=\|u-\Psi(x;c^*,\theta^*)\|_{L^2(\Omega)},
\ \ \ e_{H^1}:=\left|u-\Psi(x;c^*,\theta^*)\right|_{H^1(\Omega)},
\end{eqnarray*}
where $\|\cdot\|_{L^2}$ and $|\cdot|_{H^1}$ denote the $L^2(\Omega)$ norm and the $H^1(\Omega)$ seminorm, respectively.

%For the approximate eigenpair approximation $(\lambda^*,\Psi(x;c^*,\theta^*))$ by NN element method 
%for the eigenvalue problem, we define the $L^2(\Omega)$ projection operator $\mathcal P:H_0^1(\Omega)
%\rightarrow {\rm span}\{\Psi(x;c^*,\theta^*)\}$ as follows:
%\begin{eqnarray*}
%\left\langle\mathcal Pu,v\right\rangle_{L^2}=\left\langle u,v\right\rangle_{L^2}:=\int_\Omega uvdx,
%\ \ \ \forall v\in {\rm span}\{\Psi(x;c^*,\theta^*)\}\ \ {\rm for}\ u\in H_0^1(\Omega).
%\end{eqnarray*}
%And we define the $H^1(\Omega)$ projection operator $\mathcal Q:H_0^1(\Omega)\rightarrow {\rm span}\{\Psi(x;c^*,\theta^*)\}$
%as follows:
%\begin{eqnarray*}
%\left\langle\mathcal Qu,v\right\rangle_{H^1}=\left\langle u,v\right\rangle_{H^1}
%:=\int_\Omega\nabla u\cdot\nabla vdx,
%\ \ \ \forall v\in {\rm span}\{\Psi(x;c^*,\theta^*)\}\ \ {\rm for}\ u\in H_0^1(\Omega).
%\end{eqnarray*}
%Then we define the following errors for the approximated eigenvalue $\lambda^*$ and eigenfunction $\Psi(x;\theta^*)$
%\begin{eqnarray*}\label{relative_errors}
%e_\lambda:=\frac{|\lambda^*-\lambda|}{|\lambda|},\ \ \ e_{L^2}:=\frac{\|u-\mathcal Pu\|_{L^2(\Omega)}}{\|u\|_{L^2(\Omega)}},
%\ \ \ e_{H^1}:=\frac{\left|u-\mathcal Qu\right|_{H^1(\Omega)}}{\left|u\right|_{H^1(\Omega)}}.
%\end{eqnarray*}
%These relative errors are used to test the accuracy of the numerical examples in this section. 
%%for eigenvalue problems and PDEs.

In the implementation of the proposed NN element machine learning method,  
the neural networks are trained by Adam optimizer \cite{KingmaAdam} in combination with L-BFGS and the 
automatic differentiation in PyTorch is used to compute the derivatives.

%\subsection{Laplace problem with homogeneous Dirichlet boundary condition}
We consider the following Laplace problem with the homogeneous Dirichlet boundary condition: 
Find $u$ such that
\begin{eqnarray*}\label{Laplace}
\left\{
\begin{aligned}
-\Delta u&=f,\ \ \ &x\in&\Omega,\\
u&=0,\ \ \ &x\in&\partial\Omega,
\end{aligned}
\right.
\end{eqnarray*}
where $\Omega = (0,1)\times (0,1)$. % is the 2D unit square. 
%It is easy to know the exact solution is 
%$u(x)=\sum_{k=1}^d\sin(2\pi x_k)\cdot\prod_{i\neq k}^d\sin(\pi x_i)$.
%This is a well behaved problem with a smooth solution that has no trouble spots.
%It can be used for seeing how an adaptive algorithm behaves in a context where
%adaptivity isn’t really needed.
%Equation: Poisson
%Domain: unit square
%Boundary conditions: Dirichlet
Here, we set the exact solution as
$u(x,y)=\sin(\pi x)\sin(\pi y)$. %where the parameter $p$ determines the degree of the polynomial solution. 
%It should be chosen to be large enough such that the highest order finite elements to be used will not give the exact solution.
%The solution with $p = 10$ is shown in Figure 1, both as a color-mapped
%image and as a surface in perspective. The other figures that show solutions
%also present these two views.

%Notice that in this example, we set $b(x) = 0$. We employ 
Algorithm \ref{Algorithm_NNElement} is adopted to solve this problem with the 
following loss function 
%as well, but in Step 4, we use the following loss function instead:
\begin{eqnarray*}
\mathcal L^{(\ell+1)}(c^{(\ell+1)},\theta^{(\ell)}):= \frac{1}{2}\left\|\nabla \Psi(x;c^{(\ell+1)},\theta^{(\ell)})\right\|_0^2 
- \left(f,\Psi(x;c^{(\ell+1)},\theta^{(\ell)})\right), 
% \left\|f + \Delta\Psi(x;c^{(\ell+1)},\theta^{(\ell)})\right\|_0.
\end{eqnarray*}
for Step 4. 
%\comm{The corresponding a posteriori error estimate can be deduced 
%in the same way as Theorem \ref{Theorem_hDirichlet} 
%using Lemma \ref{lemma_Green}, Cauchy-Schwarz inequality 
%and Poincar\'e inequality. We omit the proof here}.

For each envelop function, \comm{the associated NN has two hidden layers 
and each hidden layer has $16$ neurons}
The activation function is chosen as the sine function in each hidden layer.
%\comm{We use the method described in Section \ref{section_alg_hDirichlet} 
%to guarantee the homogeneous boundary condition}.

\comm{In the computation of the integrations for the loss function, 
we choose 36 Gauss points in each triangular element $K\in\mathcal T_h$.
The Adam optimizer is employed with a learning rate $0.0003$ in the first $50000$ epochs
to produce the final NN element approximation.}

In order to show the efficiency of the proposed NN element method, 
we will also solve the problem (\ref{Laplace}) 
with the finite element methods. 
The corresponding numerical results are shown in Tables \ref{table_NNEMP2} and \ref{table_NNEMP3}.         
In Table \ref{table_NNEMP2}, FEMP2 denotes the second order finite element method is adopted to solve the 
associate problem, NNEMP2 means we use the basis of the second order Lagrange finite element space as the 
envelop functions to build the subspace $V_{\rm NNh}(\mathcal T_h, \theta^{(\ell)})$. 
The notation in Table  \ref{table_NNEMP3} has the similar definitions. 

Based on the results in Tables  \ref{table_NNEMP2} and \ref{table_NNEMP3}, 
we can find the NN element based machine learning method proposed in this paper 
shows good efficiency for solving (\ref{Laplace}).

%==================================================================
\begin{table}[htb!]
\begin{center}
\caption{Errors of homogeneous Dirichlet boundary value problem for the P2 NN element method.}\label{table_NNEMP2}
\begin{tabular}{ccc|ccc}
\hline
&   &   FEMP2&   &   NNEMP2\\
\hline
$h$&   $e_{H^1}$&   $e_{L^2}$&   $e_{H^1}$&   $e_{L^2}$\\
\hline
$\sqrt{2}/2$&      6.581e-01&   4.277e-02&      2.531e-02&   6.698e-04\\
$\sqrt{2}/4$&     1.831e-01&   5.559e-03&     2.181e-03&   4.339e-05\\
$\sqrt{2}/8$&     4.723e-02&   6.985e-04&     2.149e-04&   2.208e-06\\
$\sqrt{2}/16$&     1.191e-02&   8.741e-05&     &   \\
$\sqrt{2}/32$&     2.983e-03&   1.093e-05&     &   \\
\hline
\end{tabular}
\end{center}
\end{table}
%==================================================================                  
\begin{table}[htb!]
\begin{center}
\caption{Errors of homogeneous Dirichlet boundary value problem for the P3 NN element method.}\label{table_NNEMP3}
\begin{tabular}{ccc|ccc}
\hline
&   &   FEMP3&   &   NNEMP3\\
\hline
$h$&   $e_{H^1}$&   $e_{L^2}$&   $e_{H^1}$&   $e_{L^2}$\\
\hline
$\sqrt{2}/2$&     1.043e-01 &   1.022e-02&      9.005e-05&   2.026e-06\\
$\sqrt{2}/4$&     1.417e-02&   6.160e-04&     6.543e-05&   1.171e-06\\
$\sqrt{2}/8$&     1.778e-03&   3.626e-05&     4.670e-06&   3.945e-08\\
$\sqrt{2}/16$&     2.209e-04&   2.197e-06&     &   \\
$\sqrt{2}/32$&     2.749e-05&   1.353e-07&     &   \\
\hline
\end{tabular}
\end{center}
\end{table}

\section{Concluding remarks}
In this paper, we propose a type of NN element based machine learning method for solving 
partial differential equations. 
The method is designed by combining the finite element mesh and neural network 
to build a type of neural network element space.  
The finite element mesh provides a way to build the envelop functions to 
satisfy the boundary conditions directly on the complex geometric domains.  
The neural network and the corresponding machine learning method are 
adopted to improve the approximation accuracy of the 
associated NN element space. For understanding the proposed numerical method, 
we also proved the approximation error analysis based on the idea of the partition of unity. 
Numerical examples are also provided to validate the efficiency and accuracy of the 
proposed NN element based machine learning method. 

This paper only consider the triangle mesh on the two dimensional domains. It is easy to know 
that the method here can be extended more general type of meshes on the two or three dimensional 
domains. From this point of view, the method here has potential applications in 
the numerical simulation for engineering problems. 

We are only concerned with solving the homogeneous Dirichlet boundary condition 
with high accuracy and efficiency. It is obvious that the proposed numerical method 
can also handle the partial differential equations with other type of 
homogeneous and nonhomogeneous boundary conditions by using the way in \cite{WangLinLiaoLiuXie}.  
This paper takes the second order elliptic problem as the example 
to show the computing way of the proposed NN element based machine learning method. 
Of course, other type of problems can also be solved with the method here
which will be our future work. 

In Algorithm \ref{Algorithm_NNElement}, the Ritz type of loss function is adopted to update
the NN element space. It is easy to know that other types of loss functions, such as the a posteriori 
error estimation, can also be used here. 

Finally, we should point out that the method in this paper gives a way to build the engineering 
software for the machine learning methods which are designed to solve the problems from the 
engineering field.

\bibliographystyle{siamplain}
%\bibliography{references}

\newpage

\begin{center}
\Large Declaration of Competing Interest
\end{center}

%We would like to submit our manuscript entitled 
%``Solving High Dimensional Partial Differential Equations Using  
%Tensor Neural Network and A Posteriori Error Estimators'' for the 
%possible publication in ``SIAM Journal of Scientific Computing'' 
%if suitable.\\

We declare that we have no financial and personal relationships with other people 
or organizations that can inappropriately influence our work, 
there is no professional or other personal 
interest of any nature or kind in any product, service and/or company that could be 
construed as influencing the position presented in, 
or the review of, the manuscript entitled: Solving High-dimensional Partial 
Differential Equations Using  
Tensor Neural Network and A Posteriori Error Estimators.\\

No conflict of interest exits in the submission of this manuscript, and manuscript 
is approved by all authors for publication. 
I would like to declare on behalf of my co-authors that the work described was 
original research that has not been published previously, 
and not under consideration for publication elsewhere, in whole or in part.

\end{document}